\documentclass{amsart}
\usepackage{amsthm, amssymb}

\newtheorem{theorem}{Theorem}
\newtheorem{proposition}[theorem]{Proposition}

\DeclareMathOperator*{\init}{init}
\DeclareMathOperator*{\rk}{rk}
\newcommand{\A}{\mathcal{A}}
\newcommand{\upset}{\mathclose\uparrow}

\newcommand{\newword}[1]{\textbf{#1}}

\setlength{\marginparwidth}{1.2in}
\let\oldmarginpar\marginpar
\renewcommand\marginpar[1]{\-\oldmarginpar[\footnotesize #1]{\raggedright\footnotesize #1}}

\title{A Gr\"obner basis for the graph of the reciprocal plane}
\author{Alex Fink}
\author{David E Speyer}
\author{Alexander Woo}

\begin{document}
\begin{abstract}
Given the complement of a hyperplane arrangement,
let $\Gamma$ be the closure of the graph of the map inverting each of its defining linear forms.
The characteristic polynomial manifests itself in the Hilbert series of~$\Gamma$
in two different-seeming ways, one due to Orlik and Terao and the other to Huh and Katz.  
We define an extension of the no broken circuit complex of a matroid
and use it to give a direct Gr\"obner basis argument
that the polynomials extracted from the Hilbert series in these two ways agree.
\end{abstract}
\maketitle

Let $\mathcal{A}$ be an arrangement of $n+1$ distinct hyperplanes
in an $r+1$-dimensional vector space $L$ over some field $\Bbbk$.
Assume that $\mathcal{A}$ is essential, that is, that the intersection of all its hyperplanes is~$\{0\}$.
We coordinatize $\mathcal{A}$ by fixing a linear form $x_i$ vanishing on the $i$\/th hyperplane.
These linear forms provide an injective linear map $L \to \Bbbk^{n+1}$, and we will identify $L$ with its image in $\Bbbk^{n+1}$ from now on.
The hyperplane arrangement $\mathcal{A}$ is then $\{L\cap (x_i=0) \mid 0\leq i\leq n\}$.  
The \newword{arrangement complement} $\widetilde{L} = L\setminus \mathcal{A}$ is the complement of the coordinate hyperplanes.  
We can projectivize $\widetilde{L}\subseteq\Bbbk^{n+1}$ to form
$\mathbb{P}(\widetilde{L})\subseteq\mathbb{P}^n$.

Naturally associated to $\mathcal A$ is a matroid $M$
of rank $r+1$ represented over~$\Bbbk$ on the set of the $n+1$ hyperplanes in~$\mathcal{A}$,
with no loops and no collinear points.  
The matroid $M$ encodes the dependencies among the coordinate functions $x_i$ on~$L$\/:
a~set of coordinate functions $\{x_i \mid i\in\mathcal{C}\}$ is linearly dependent if and only if $\mathcal{C}$ is a dependent set of~$M$.  

An important invariant associated to any matroid,
and via this to any hyperplane arrangement,
is the \newword{characteristic polynomial} $\chi_M(q)$, defined as
$$\chi_M(q)=\sum_{F\subseteq M}\mu(\hat0, F)\, q^{r+1-r(F)},$$
where the sum is over the flats of the matroid $M$.
Here $\mu$ denotes the M\"obius function, and $r(F)$ is the rank of the flat $F$.

The Cremona transformation $\mathbb{P}^n\dashrightarrow \mathbb{P}^n$ is defined by sending $(z_0:\cdots:z_n)$ to $(z_0^{-1}:\cdots:z_n^{-1})$.  The \newword{reciprocal plane} $R_\A$ is the closure of the image of the Cremona transformation restricted to $\mathbb{P}(\widetilde{L})$, embedded as a closed subvariety of $\mathbb{P}^n$.  The \newword{reciprocal graph} $\Gamma_\A$ is the closure of the graph of the Cremona transformation restricted to $\mathbb{P}(\widetilde{L})$, embedded as a closed subvariety of $\mathbb{P}^n\times\mathbb{P}^n$.  Note that $R_\A=\pi_2(\Gamma_\A)$, where $\pi_2$ denotes projection to the second factor.

In previous work, two ways have appeared
of recovering the characteristic polynomial from the above geometry.
Orlik and Terao~\cite{OT} show that the Hilbert series of (the projective coordinate ring of) $R_\A$ is given by
$$H(R_\A;t) = \sum_{i=0}^{r+1} w_i \left(\frac{t}{1-t}\right)^i,$$
where $(-1)^i w_i$ is the coefficient of $q^{r+1-i}$ in $\chi_M(q)$;
note that $\chi_M(q)$ is a polynomial in~$q$ of degree~$r+1$ whose coefficients alternate in sign.
In terms of the Grothendieck ring $K^0(\mathbb{P}^n)$, this means that
$$[R_\A]=\sum_{i=0}^r w_{i+1}[(\Bbbk^*)^{i}],$$
where by $[(\Bbbk^*)^a]$ we mean
$$[(\Bbbk^*)^a]=\sum_{j=0}^a (-1)^j\binom{a+1}{j}[\mathbb{P}^{a-j}].$$
On the other hand, Huh and Katz~\cite{HK} show that the cohomology class of $\Gamma_\A$ in $H^{2(2n-r)}(\mathbb{P}^n\times\mathbb{P}^n)$ is given by
$$[\Gamma_\A]=\sum_{i=0}^r \overline{w}_i[\mathbb{P}^{r-i}\times\mathbb{P}^i],$$ 
where $(-1)^i\overline{w}_i$
is the coefficient of $q^{r-i}$ 
in the \newword{reduced characteristic polynomial} $\overline{\chi}_M(q) := \chi_M(q)/(q-1)$.  This is a rather surprising coincidence!  By the Chern map, the class of a subvariety $Y\subseteq X$ in $H^*(X)$ 
can be thought of as the leading terms of its class in $K^0(X)$.  
Hence this coincidence between a $K$-class and a cohomology class
suggests a correspondence between
the {\em leading} terms of $[\Gamma_\A]\in K^0(\mathbb{P}^n\times\mathbb{P}^n)$ 
and {\em all} the terms of $[R_\A]=[\pi_2(\Gamma_\A)]\in K^0(\mathbb{P}^n)$.
Note that this relationship is not simply the one arising from pushforward along the projection,
which cannot be computed from only the leading terms of $[\Gamma_\A]$.

Proudfoot and the second author~\cite{PS} give an explanation of the Orlik--Terao result in terms of combinatorial commutative algebra by showing that $R_\A$ has a Gr\"obner degeneration to the Stanley--Reisner scheme associated to the \newword{no broken circuit complex} $\Delta^{\mathrm{NBC}}_M$, whose faces are counted by the characteristic polynomial.  The no broken circuit complex is a cone over the vertex corresponding to the first variable, so one can define the \newword{reduced no broken circuit complex} $\Delta^{\mathrm{RNBC}}_M$ by deleting the cone point.  The faces of $\Delta^{\mathrm{RNBC}}_M$ are counted by the reduced characteristic polynomial.
(The authors caution the reader that sources differ as to whether $\Delta^{\mathrm{NBC}}_M$ should be called the ``no broken circuit complex" or the ``broken circuit complex".)

In this paper, we give a similar combinatorial commutative algebra explanation for the Huh--Katz
result by defining a family of \newword{extended no broken circuit complexes}
$\Delta^{\mathrm{ENBC}}_{M,\prec}$, one for each total order $\prec$ on $\{0,\ldots,n\}$,
and showing that $\Gamma_\A$ has Gr\"obner degenerations to the Stanley--Reisner schemes
of $\Delta^{\mathrm{ENBC}}_{M,\prec}$.  Our simplicial complexes
$\Delta^{\mathrm{ENBC}}_{M,\prec}$ are all pure with one facet for every face
of $\Delta^{\mathrm{RNBC}}_M$, 
explaining the common appearance of the (reduced) characteristic polynomial in these two different settings.

By counting the faces of $\Delta^{\mathrm{ENBC}}_{M,\prec}$, we also obtain the bigraded Hilbert series for $\Gamma_\A$, which is
\begin{align*}
H(\Gamma_\A; q, t)
&=\sum_{i=0}^r \overline{w}_i \left(1+\frac{q}{1-q}\right)^{r+1-i}\left(\frac{t}{1-t}\right)^i\left(1+\frac{t}{1-t}\right) \\
&= \frac{1}{1-qt}\left(\frac{t}{t-1}\right)^{r+1}\chi_M\left(\frac{t-1}{(1-q)t}\right).
\end{align*}

The coordinate ring of $\Gamma_\A$ was recently independently studied
by Garrousian, Simis, and Tohaneanu~\cite{GST}.  We recover their results on a presentation
for the coordinate ring of $\Gamma_\A$.  They furthermore show that $\Gamma_\A$ is
arithmetically Cohen--Macaulay.  It would be interesting to recover this result by showing that
our complexes $\Delta^{\mathrm{ENBC}}_{M,\prec}$ are shellable.  

\section*{Acknowledgments}
This paper arose from a working group at the Fields Institute thematic semester
\emph{Combinatorial algebraic geometry}, which included 
Laura Escobar, Federico Galetto, Christian Hasse, Alexandra Seceleanu, and Kristin Shaw.
We thank the working group for fruitful discussions,
Diane Maclagan and Greg Smith for organizing the thematic semester,
and the Fields Institute staff for their hospitality.

The first author was supported by EPSRC grant EP/M01245X/1; the second author was supported by NSF grant DMS-1600223;
the third author was supported by Simons Collaboration Grant 359792.

\section{The extended no broken circuit complex}\label{sec:ENBC}

Given a matroid $M$ of rank $r+1$ on a ground set $\{0,\ldots,n\}$, 
a \newword{circuit} is a minimal dependent set, 
and a \newword{broken circuit} is the result of deleting the {\em least} element from any circuit
in the natural order $0<1<\cdots<n$.  
The \newword{no broken circuit complex} $\Delta^{\mathrm{NBC}}_M$ is the simplicial complex whose minimal {\em nonfaces} are the broken circuits of $M$.  In other words, the facets of $\Delta^{\mathrm{NBC}}_M$ are the maximal sets {\em not} containing a broken circuit.  Note that $0$ is never in a broken circuit, and hence the vertex $0$ is always a cone point of $\Delta^{\mathrm{NBC}}_M$.  Following Brylawski~\cite{B}, we define the \newword{reduced no broken circuit complex} $\Delta^{\mathrm{RNBC}}_M$ as the simplicial complex obtained by deleting the vertex $0$ from $\Delta^{\mathrm{NBC}}_M$.

It is a classical result, due to Whitney~\cite{W} for graphical matroids and Brylawski~\cite{B} in general,
that the characteristic polynomial of $M$ is given by 
$$\chi_M(q)=\sum_{i=0}^{r+1} (-1)^i w_i\, q^{r+1-i},$$
where $w_i$ (which is sometimes known as the $i$-th \newword{Whitney number of the first kind}) is the number of faces of $\Delta^{\mathrm{NBC}}_M$ with $i$ vertices.  
Note that the reduced characteristic polynomial satisfies
$$\overline{\chi}_M(q)=\chi_M(q)/(q-1)=\sum_{i=0}^r (-1)^i \overline{w}_i q^{r-i},$$ 
where $\overline{w}_i$ is the number of faces of $\Delta^{\mathrm{RNBC}}_M$ with $i$ vertices.

Given a second partial order $\prec$ on the set $\{0,\ldots,n\}$,
we define the \newword{extended no broken circuit complex} $\Delta^{\mathrm{ENBC}}_{M,\prec}$ as follows.  
The complex $\Delta^{\mathrm{ENBC}}_{M,\prec}$ has $2(n+1)$ vertices, which we denote $\{x_0,\ldots,x_n,y_0, \ldots,y_n\}$.  
Given any face $F\in\Delta^{\mathrm{RNBC}}_M$ (including $F=\emptyset$), 
let $L(F)$ be the basis of~$M$ containing~$F$ which is lexicographically maximal with respect to~$\prec$.  
To be precise,
let $i\upset=\{j\mid i\prec j\}$ be the up-set generated by $i$ in~$\prec$.
Then $i\in L(F)$ if $i\in F$ or 
$$\rk(i\upset\cup F)> \rk((i\upset\setminus\{i\})\cup F).$$  
Given any $F\in\Delta^{\mathrm{RNBC}}_M$, define 
$$\overline{F}=\{y_i\mid y\in F\} \cup \{x_j\mid j\in L(F)\setminus F\}\cup\{y_0\}.$$
Thus, putting aside $y_0$ which will be a cone point,
$L(F)$ is the set of subscripts of vertices of~$\overline{F}$,
with $F$ distinguished within it as the set of subscripts of $y$-variables.
The facets of $\Delta^{\mathrm{ENBC}}_{M,\prec}$ are the sets $\overline{F}$; in other words,
$$\Delta^{\mathrm{ENBC}}_{M,\prec}=\{G\mid G\subseteq \overline{F}, F\in\Delta^{\mathrm{RNBC}}_M\}.$$

\section{Squarefree initial ideals and Stanley--Reisner rings}

Let $S=\Bbbk[x_0,\ldots,x_n, y_0,\ldots,y_n]$ be the bihomogeneous coordinate ring of $\mathbb{P}^n\times\mathbb{P}^n$, and fix a total term order on $S$.  Given $f\in S$, $\init f$ is the largest monomial in the support of $f$.  
Given an ideal $I\subseteq S$, its initial ideal is $\init I=\{\init f\mid f\in I\}$.  
Since our term order is a total order, $\init I$ is a monomial ideal.  It is a general fact that there exists a flat degeneration from the scheme $V(I)$ to $V(\init I)$ preserving the Hilbert series and hence the cohomology class.

Let $\Delta$ be a simplicial complex with vertex set $\{x_0,\ldots,x_n,y_0,\ldots,y_n\}$.  The Stanley--Reisner ideal $I(\Delta)\subseteq S$ is the squarefree monomial ideal generated by $\prod_{v\in A} v$ for all subsets $A$ such that $A\not\subseteq \Delta$.  The nonzero monomials of $S/I(\Delta)$ are precisely those whose variables are faces of $\Delta$, so the bigraded Hilbert series 
with $x$-degree counted by $q$ and $y$-degree counted by $t$ is given by
\begin{equation}\label{eq:H(q,t)}
H(S/I(\Delta);q,t)=\sum_{F\in\Delta} \left(\frac{q}{1-q}\right)^{x(F)} \left(\frac{t}{1-t}\right)^{y(F)},
\end{equation}
where the sum is over all faces $F$ in $\Delta$ and $x(F)$ and $y(F)$ denote respectively
the number of $x$-vertices and $y$-vertices in $F$.

Since $I(\Delta)$ is squarefree, the subscheme $V(\Delta)$ of $\mathbb{P}^n\times\mathbb{P}^n$ defined by $I(\Delta)$ is reduced, and its irreducible components are the subspaces spanned by $v\in F$ as $F$ ranges over the {\em maximal} faces of $\Delta$.  Hence the class of $V(\Delta)$ in $H^*(\mathbb{P}^n\times\mathbb{P}^n)$  is 
\begin{equation}\label{eq:cohom class}
[V(\Delta)]=\sum_F [\mathbb{P}^{x(F)-1}\times\mathbb{P}^{y(F)-1}],
\end{equation}
where the sum is now over all faces $F$ of maximal dimension (which in general may not be all maximal faces).

\section{Main theorem and proof}

Our main theorem is the following.

\begin{theorem}
Let $\Gamma_\A$ be the reciprocal graph.
Fix a total order $\prec$ on~$\{0,\ldots,n\}$.
Let $>$ be a 
term order on $S$ 
such that $x_i>x_j$ if $i\prec j$ while $y_i>y_j$ if $i>j$,
and any term that is a multiple of~$y_0^{\raisebox{0.25ex}{\scriptsize a}}$ 
is less than any term of the same degree that is not.  Then
\begin{enumerate}
\item (Also~\cite[Thm.\ 4.2]{GST}) The defining ideal $I(\Gamma_\A)\subseteq S$ is generated by the following elements:
\begin{itemize}
\item $\sum_{i\in \mathcal{C}} a_i x_i,$ where $\mathcal{C}$ is a circuit and the $a_i\in\Bbbk$ define the relation given by the circuit.
\item $\sum_{i\in \mathcal{C}} a_i\prod_{j\in\mathcal{C}\setminus\{i\}} y_j$, where $\mathcal{C}$ is a circuit and $a_i$ are as above.
\item $x_iy_i-x_0y_0$, for all $i$ with $1\leq i\leq n$
\end{itemize}
\item The initial ideal $\init I(\Gamma_\A)\subseteq S$ is generated by the following elements:
\begin{itemize}
\item $\prod_{i\in \mathcal{B}} y_i$, where $\mathcal{B}$ is a broken circuit
\item $x_j\prod_{i\in I} y_i$, where $I$ is any subset of $\{1,\ldots, n\}$ (so {\em not} including $0$) and
$\rk(j\upset\cup I)=\rk((j\upset\setminus\{j\})\cup I)$.  (Note this includes the degenerate case where $j\in I$.)
\end{itemize}
\item The initial ideal $\init I(\Gamma_\A)$ is the Stanley--Reisner ideal $I(\Delta^{\mathrm{ENBC}}_{M,\prec})$ of the extended no broken circuit complex.
\end{enumerate}
\end{theorem}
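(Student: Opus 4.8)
The plan is to establish the three parts in sequence, with each feeding into the next. For part~(1), I would first check that the listed elements do lie in $I(\Gamma_\A)$: the linear forms $\sum a_i x_i$ vanish on $L$ (hence on $\mathbb{P}(\widetilde L)$ and its closure in the first factor), the binomials $x_iy_i - x_0y_0$ express that the second coordinates are reciprocals of the first on $\widetilde L$, and the ``$y$-circuit'' relations $\sum a_i \prod_{j\in\mathcal C\setminus i} y_j$ are obtained from the linear circuit relations by clearing denominators under the Cremona map $z\mapsto z^{-1}$. To see these \emph{generate} $I(\Gamma_\A)$, I would argue that the quotient $S/J$ by the ideal $J$ they generate already has the right dimension and is reduced and irreducible (so equals $S/I(\Gamma_\A)$), or alternatively defer this: prove directly that $\init J$ is the Stanley--Reisner ideal $I(\Delta^{\mathrm{ENBC}}_{M,\prec})$, whose Hilbert series via~\eqref{eq:H(q,t)} matches the known Hilbert series of $\Gamma_\A$, forcing $J = I(\Gamma_\A)$. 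The latter route is cleaner since it lets parts~(2) and~(3) do the real work.

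For part~(2), the plan is a Buchberger-style argument: show the generators from~(1) form a Gr\"obner basis under~$>$ by checking that all S-pairs reduce to zero, and simultaneously identify the initial monomials. The leading term of $\sum a_i x_i$ is $x_{j}$ for $j$ the $\prec$-least index in $\mathcal C$, which (taking $I=\emptyset$) is the degenerate case of the second family in~(2). The leading term of the $y$-circuit relation: since $y_i > y_j$ when $i > j$, the leading monomial of $\sum_{i\in\mathcal C} a_i\prod_{j\in\mathcal C\setminus i} y_j$ is $\prod_{j\in\mathcal C\setminus\{m\}} y_j$ where $m$ is the \emph{numerically largest} index of $\mathcal C$ — that is, precisely a broken circuit $\mathcal B$ (after noting $y_0$-avoidance is automatic once $\mathcal C\ni 0$ is handled by the term-order convention favoring terms not divisible by $y_0$). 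The leading term of $x_iy_i - x_0y_0$ is $x_iy_i$, which is the $I=\{i\}$ case. I expect the S-polynomial computations to need the rank-function identity $\rk(j\upset\cup I)=\rk((j\upset\setminus\{j\})\cup I)$ to recognize which products $x_j\prod_{i\in I}y_i$ actually appear; the main subtlety is S-pairs between two $x$-linear forms (reducing via the matroid's circuit elimination axiom) and between a binomial and a $y$-circuit relation, both of which I expect to close up using standard matroid exchange. This Buchberger check, together with verifying no other monomials are needed, is where most of the labor lies.

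Part~(3) is then bookkeeping: I would show that the monomial ideal described in~(2) equals $I(\Delta^{\mathrm{ENBC}}_{M,\prec})$ by matching minimal nonfaces. A minimal nonface is either (a) a broken circuit $\mathcal B$ realized in $y$-variables, matching $\prod_{i\in\mathcal B}y_i$; or (b) a set $\{x_j\}\cup\{y_i : i\in I\}$ that fails to sit inside any facet $\overline F$, which by the definition of $L(F)$ — namely $j\in L(F)$ iff $\rk(j\upset\cup F) > \rk((j\upset\setminus j)\cup F)$ — happens exactly when $\rk(j\upset\cup I) = \rk((j\upset\setminus j)\cup I)$, matching the second family. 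One must also check there are no further minimal nonfaces (e.g. mixed ones involving several $x$'s, or involving $y_0$): the facets $\overline F$ all contain $y_0$, so $y_0$ is a cone point and never in a nonface; and any face with two $x$-variables or more is generated by combining type-(b) nonfaces, which I would verify by a rank/closure argument on $L(F)$. The expected main obstacle is the combinatorial verification that every facet of $\Delta^{\mathrm{ENBC}}_{M,\prec}$ is genuinely a \emph{standard monomial} complement of the initial ideal — equivalently, that the facet count $|\{\text{faces of }\Delta^{\mathrm{RNBC}}_M\}|$ exactly accounts for the Hilbert series' leading behavior — which ties part~(3) back to the dimension count needed to close part~(1).
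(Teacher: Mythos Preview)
Your plan differs from the paper's in a structurally important way. You propose a direct Buchberger computation for part~(2): run the S-pair algorithm on the generators from~(1), discover the monomials $x_j\prod_{i\in I}y_i$ along the way, and verify termination. The paper does \emph{not} do this. Instead it proves only the chain of inclusions
\[
I(\Delta^{\mathrm{ENBC}}_{M,\prec})\ \subseteq\ K\ \subseteq\ \init J\ \subseteq\ \init I(\Gamma_\A),
\]
where the middle inclusion is obtained by exhibiting, for each monomial generator $x_j\prod_{i\in I}y_i$ of~$K$, an explicit element of~$J$ with that leading term (multiply the relevant circuit relation by $\prod_{i\in I}y_i$ and trade each $x_ky_k$ with $k\in I$ for $x_0y_0$). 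Equality is then forced by invoking Huh--Katz: their formula for $[\Gamma_\A]$ matches the facet count of $\Delta^{\mathrm{ENBC}}_{M,\prec}$, so by equidimensionality of the Stanley--Reisner scheme the inclusions collapse. Thus the paper \emph{uses} Huh--Katz as an external input rather than reproving it, which is much shorter than a full S-pair check but makes the argument non-self-contained. Your Buchberger route, if completed, would in effect give an independent proof of the Huh--Katz class; but you would still need some external fact to close $J=I(\Gamma_\A)$, and ``the known Hilbert series of $\Gamma_\A$'' is not available prior to this theorem except via Huh--Katz (or some equivalent), so you should be explicit that this is what you are invoking.

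Two smaller points. First, your identification of the leading term of the $y$-circuit relation is backwards: since $y_i>y_j$ for $i>j$, the leading monomial omits the \emph{numerically smallest} index of~$\mathcal C$, not the largest---which is exactly what makes it a broken circuit by definition. Second, in part~(3) you frame the combinatorics as a matching of minimal nonfaces, whereas the paper argues by complements (a squarefree monomial not in $K$ is supported on a subset of some $\overline F$); these are equivalent, but note that the generators $x_j\prod_{i\in I}y_i$ in~(2) are typically \emph{not} minimal nonfaces, so a minimal-nonface description of $K$ would look different from~(2) and would require additional justification.
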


\begin{proof}
Let $J$ be the ideal generated by the elements listed in part (1),
and let $K$ be the ideal generated by the elements of part (2).  We will show that $J\subseteq I(\Gamma_\A)$ and that $I(\Delta^{\mathrm{ENBC}}_{M,\prec})\subseteq K\subseteq \init J$.  We first explain how to conclude the proof using these facts.

Since $\Delta^{\mathrm{ENBC}}_{M,\prec}$ is pure-dimensional, $I(\Delta^{\mathrm{ENBC}}_{M,\prec})$ defines an equidimensional and reduced scheme.  Hence if $A$ is a monomial ideal containing $I(\Delta^{\mathrm{ENBC}}_{M,\prec})$ such that we have the equality $$[V(S/A)]=[V(\Delta^{\mathrm{ENBC}}_{M,\prec})]\in H^*(\mathbb{P}^n\times\mathbb{P}^n)$$ of cohomology classes (or equivalently of bidgrees), 
then $A=I(\Delta^{\mathrm{ENBC}}_{M,\prec})$~\cite[Exer.\ 8.13]{MS}.  
By construction, for each face $F\in\Delta^{\mathrm{RNBC}}_M$,
we have a facet $\overline{F}\in\Delta^{\mathrm{ENBC}}_{M,\prec}$,
and every facet has $r+2$ vertices.  Furthermore, $\overline{F}$ has
$r-|F|+1$ $x$-vertices and $|F|+1$ $y$-vertices.
Hence, by Equation~\eqref{eq:cohom class},
$$[V(\Delta^{\mathrm{ENBC}}_{M,\prec})]=\sum_{F\in\Delta^{\mathrm{RNBC}}_M} [\mathbb{P}^{r-|F|}\times\mathbb{P}^{|F|}],$$
where the sum is over all faces $F$ of $\Delta^{\mathrm{RNBC}}_M$.

On the other hand, Huh and Katz~\cite{HK} show that
$$[\Gamma_\A]=\sum_{i=0}^r \overline{w}_i[\mathbb{P}^{r-i}\times\mathbb{P}^{i}],$$  
where $(-1)^i\overline{w}_i$ is the coefficient of $q^{r-i}$ in $\overline{\chi}_M(q)$.  
Since $\overline{w}_i$ is the number of faces of $\Delta^{\mathrm{RNBC}}_M$ with $i$ vertices, $[\Gamma_\A]=[V(\Delta^{\mathrm{ENBC}}_{M,\prec})]$.  Taking an initial ideal preserves the cohomology class, so
$[V(\init I(\Gamma_\A))]=[V(\Delta^{\mathrm{ENBC}}_{M,\prec})]$, and $\init I(\Gamma_\A)=I(\Delta^{\mathrm{ENBC}}_{M,\prec})$.  Therefore, $$I(\Delta^{\mathrm{ENBC}}_{M,\prec})=K=\init J=\init I(\Gamma_\A),$$ and $J=I(\Gamma_\A)$.

To show $J\subseteq I(\Gamma_\A)$, we show the generators of each type vanish on $\Gamma_\A$.  
The generators involving only the $x$ variables come from the relations defining the linear space~$L$.  
On a point in $\widetilde{L}$ where $x_i\neq 0$ for all $i$, $$y_i=\frac{1}{x_i},$$ so $$x_iy_i=x_jy_j$$ for all $i$ and $j$, and in particular for $j=0$.  Also, given a relation $$\sum_{i\in\mathcal{C}} a_i x_i=0$$ on $L$ coming from a circuit $\mathcal{C}$, we have relations $$\sum_{i\in\mathcal{C}} \frac{a_i}{y_i} = 0,$$ or, clearing denominators, 
$$\sum_{i\in \mathcal{C}} a_i\prod_{j\in\mathcal{C}\setminus\{i\}} y_j = 0.$$

To show $K\subseteq \init J$, for each generator of $g\in K$, we find $f\in J$ such that $\init f=g$.  If $\mathcal{B}$ is a broken circuit, then $\mathcal{B}$ is some circuit $\mathcal{C}$ with its first element removed, and hence $\prod_{i\in \mathcal{B}} y_i$ is the leading term of $$\sum_{i\in \mathcal{C}} a_i\prod_{j\in\mathcal{C}\setminus\{i\}} y_j.$$

On the other hand, given a subset $I\subseteq \{1,\ldots,n\}$ and some element $j\in\{0,\ldots,n\}$ such that
$$\rk(j\upset\cup I)=\rk((j\upset\setminus\{j\})\cup I),$$
either we are in the degenerate case where $j\in I$, where $x_jy_j\in \init J$ since
$x_jy_j-x_0y_0\in J$, or there is some circuit $\mathcal{C}$ including $j$ and a subset of
$(j\upset\setminus\{j\})\cup I$.  
Let $$h=\sum_{k\in\mathcal{C}} a_k x_k\in J$$ be the relation given by $\mathcal{C}$.  Consider $$h'=\left(\prod_{i\in I} y_i\right)h\in J.$$  We can write
$$h'=a_j\left(\prod_{i\in I} y_i\right)x_j+\sum_{k\in \mathcal{C}\cap(j\upset\setminus\{j\})} a_k\left(\prod_{i\in I} y_i\right)x_k + \sum_{k\in \mathcal{C}\setminus j\upset} a_k\left(\prod_{i\in I} y_i\right) x_k.$$
Note that, if $k\in \mathcal{C}$ and $k\not\succ j$, then $k\in I$, so
$$h'=a_j\left(\prod_{i\in I} y_i\right)x_j+\sum_{k\in \mathcal{C}\cap (j\upset\setminus\{j\})} a_k\left(\prod_{i\in I} y_i\right)x_k + \sum_{k\in \mathcal{C}\setminus j\upset} a_k\left(\prod_{i\in I\setminus\{k\}} y_i\right) x_k y_k.$$

Since $x_iy_i-x_0y_0\in J$ for all $i$,
\begin{align*}
h'' &= h' + \sum_{k\in \mathcal{C}\setminus j\upset} a_k\left(\prod_{i\in I\setminus\{k\}} y_i\right) (x_0 y_0 - x_k y_k) \\
    &=
a_j\left(\prod_{i\in I} y_i\right)x_j+\sum_{k\in \mathcal{C}\cap (j\upset\setminus\{j\})} a_k\left(\prod_{i\in I} y_i\right)x_k \\
&\,\,\,\,+
\sum_{k\in \mathcal{C}\setminus j\upset} a_k\left(\prod_{i\in I\setminus\{k\}} y_i\right) x_k y_k +
\sum_{k\in \mathcal{C}\setminus j\upset} a_k\left(\prod_{i\in I\setminus\{k\}} y_i\right) (x_0 y_0 - x_k y_k) \\
&=a_j\left(\prod_{i\in I} y_i\right)x_j+\sum_{k\in \mathcal{C}\cap (j\upset\setminus\{j\})} a_k\left(\prod_{i\in I} y_i\right)x_k +
\sum_{k\in \mathcal{C}\setminus j\upset} a_k\left(\prod_{i\in I\setminus\{k\}} y_i\right) x_0 y_0\\
&\in J.
\end{align*}
The first term is the leading term, since it contains no $y_0$ and $x_j>x_k$ for all $k\succ j$, so
$$\init h''= x_j\prod_{i\in I} y_i\in K.$$

To show $I(\Delta^{\mathrm{ENBC}}_{M,\prec})\subseteq K$, suppose $$\prod_{a\in A}\prod_{b\in B} x_ay_b \not\in K.$$  Then $B$ does not contain a broken circuit, and for every $a\in A$,
$$\rk((B\setminus\{0\})\cup a\upset)>\rk((B\setminus\{0\})\cup(a\upset\setminus\{a\})).$$
Note $B\setminus\{0\}$ is a face $F\in\Delta^{\mathrm{RNBC}}_M$, and $A\subseteq L(F)$ by our condition on elements of~$A$, so 
$$\{x_a\mid a\in A\}\cup \{y_b\mid b\in B\}\subseteq L(F)\in\Delta^{\mathrm{ENBC}}_{M,\prec}.$$  Hence, 
\[\prod_{a\in A}\prod_{b\in B} x_ay_b \not\in I(\Delta^{\mathrm{ENBC}}_{M,\prec}).\qedhere\]

\end{proof}

\section{Hilbert series}

In this section, we state and prove our formula for the bigraded Hilbert series of $S/I(\Gamma_\A)$ and show how the Orlik--Terao and Huh--Katz results follow from this formula.

\begin{proposition}
The bigraded Hilbert series of $S/I(\Gamma_\A)$ is
\[H(\Gamma_\A;q,t) 
= \frac{1}{1-qt}\left(\frac{t}{t-1}\right)^{r+1}\chi_M\left(\frac{t-1}{(1-q)t}\right).\]
\end{proposition}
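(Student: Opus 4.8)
The plan is to deduce this directly from the main theorem. Since $\init I(\Gamma_\A) = I(\Delta^{\mathrm{ENBC}}_{M,\prec})$ and passing to an initial ideal preserves the bigraded Hilbert series, it suffices to evaluate $H(S/I(\Delta^{\mathrm{ENBC}}_{M,\prec}); q, t)$ via Equation~\eqref{eq:H(q,t)}, that is, by enumerating the faces of $\Delta^{\mathrm{ENBC}}_{M,\prec}$. None of the Stanley--Reisner generators listed in part~(2) of the theorem involves $y_0$, so $y_0$ is a cone point of $\Delta^{\mathrm{ENBC}}_{M,\prec}$; this contributes a factor $1/(1-t)$ and leaves the sum over the faces of the deletion $\Delta' := \Delta^{\mathrm{ENBC}}_{M,\prec}\setminus y_0$.

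Next I would identify the faces of $\Delta'$. A squarefree monomial $\prod_{a\in A}x_a\prod_{b\in B}y_b$ with $0\notin B$ is a face exactly when it is divisible by no generator of $\init I(\Gamma_\A)$. Avoiding the broken-circuit generators says that $B$ contains no broken circuit, i.e.\ $B\in\Delta^{\mathrm{RNBC}}_M$; and avoiding the generators $x_j\prod_{i\in I}y_i$ says that for every $a\in A$ we have $\rk(a\upset\cup B) > \rk((a\upset\setminus\{a\})\cup B)$ --- the instance $I=B$ being the binding one, since enlarging $I$ cannot remove $a$ from the closure of $(a\upset\setminus\{a\})\cup I$. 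Comparing with the definition of $L(B)$, this condition is precisely that $A\cap B = \emptyset$ together with $A\subseteq L(B)$, i.e.\ $A\subseteq L(B)\setminus B$. So the faces of $\Delta'$ are the pairs $(A,B)$ with $B\in\Delta^{\mathrm{RNBC}}_M$ and $A\subseteq L(B)\setminus B$.

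Now I would sum. Since $L(B)$ is a basis containing $B$, the set $L(B)\setminus B$ has $r+1-|B|$ elements, so for fixed $B$ the sum of $(q/(1-q))^{|A|}$ over admissible $A$ is $(1-q)^{-(r+1-|B|)}$. Grouping the faces of $\Delta^{\mathrm{RNBC}}_M$ by cardinality and recalling that $\overline{w}_i$ counts those with $i$ elements gives
\[H(\Gamma_\A; q, t) = \frac{1}{1-t}\sum_{i=0}^{r}\overline{w}_i\,\frac{1}{(1-q)^{r+1-i}}\left(\frac{t}{1-t}\right)^{i},\]
which agrees with the first expression for $H(\Gamma_\A;q,t)$ recorded in the introduction.

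It remains to put this in closed form, which is pure algebra. Using $\overline{\chi}_M(q)=\sum_i(-1)^i\overline{w}_i q^{r-i}$ and $\chi_M(q)=(q-1)\overline{\chi}_M(q)$, set $u=\frac{t-1}{(1-q)t}$; then $\left(\frac{t}{t-1}\right)^{r+1}u^{r-i} = (1-q)^{-(r-i)}\left(\frac{t}{t-1}\right)^{i+1}$, and pulling out the sign via $(-1)^i/(t-1)^{i+1} = -1/(1-t)^{i+1}$ turns $\left(\frac{t}{t-1}\right)^{r+1}\overline{\chi}_M(u)$ into $-\frac{t}{1-t}\sum_i\overline{w}_i(1-q)^{-(r-i)}\left(\frac{t}{1-t}\right)^i$. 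The remaining factor $(u-1)/(1-qt)$ equals $-1/\bigl((1-q)t\bigr)$ because $(1-u)(1-q)t = 1-qt$, and multiplying these two pieces reproduces the display above. I expect no real obstacle: the only points requiring care are that $y_0$ is a cone point and that the constraint carving out $A$ is exactly $A\subseteq L(B)\setminus B$ --- neither weaker nor stronger --- after which the count and the final manipulation are routine.
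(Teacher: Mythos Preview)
Your proof is correct and follows essentially the same route as the paper: both partition the faces of $\Delta^{\mathrm{ENBC}}_{M,\prec}$ according to their $y$-part $F\in\Delta^{\mathrm{RNBC}}_M$ (modulo the cone point $y_0$), identify each block with the Boolean lattice on $L(F)\setminus F$, and sum. The only cosmetic difference is that you read this interval off from the minimal nonfaces (the generators in part~(2) of the theorem), whereas the paper reads it off from the facet description via the observation that $L(H)\setminus H\subseteq L(F)\setminus F$ whenever $F\subseteq H$.
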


\begin{proof}
Because $I(\Delta^{\mathrm{ENBC}}_{M,\prec})$ is an initial degeneration of~$I(\Gamma_\A)$,
it has the same Hilbert function.  
Computing $H(\Delta^{\mathrm{ENBC}}_{M,\prec};q,t)$ is an enumerative problem, by~\eqref{eq:H(q,t)}.

We carry out this count by means of a partition of the faces of~$\Delta^{\mathrm{ENBC}}_{M,\prec}$.
To wit, given any face $F\in\Delta^{\mathrm{RNBC}}_M$, 
let
\[J(F)=\{G\in\Delta^{\mathrm{ENBC}}_{M,\prec} \mid G\cap \{y_1,\ldots,y_n\}=\{y_i\mid i\in F\}\}. \]
The subscripts of the $y$-vertices (not including $y_0$) of any facet of $\Delta^{\mathrm{ENBC}}_{M,\prec}$ make up a face of $\Delta^{\mathrm{RNBC}}_M$,
so the same is true for any face of $\Delta^{\mathrm{ENBC}}_{M,\prec}$.
Hence $\{J(F) \mid F\in\Delta^{\mathrm{RNBC}}_M\}$ gives a partition of $\Delta^{\mathrm{ENBC}}_{M,\prec}$.

Next we show that $J(F)$ is in fact the interval $[\{y_i\mid i\in F\}, \overline{F}]$, so that $G\in J(F)$ if and only if
\[\{y_i\mid i\in F\}\subseteq G\subseteq\overline{F}.\]
Since $\overline{F}\cap \{y_1,\ldots,y_n\} = F$, we have $[\{y_i\mid i\in F\}, \overline{F}]\subseteq J(F)$.
Now suppose we have a face $G\in J(F)\subseteq\Delta^{\mathrm{ENBC}}_{M,\prec}$.  Then $G\subset\overline{H}$
for some $H\sup-set F\in \Delta^{\mathrm{RNBC}}_M$.  However, if $F\subset H$, then $L(F)\setminus F\sup-set L(H)\setminus H$,
where $L(F)$ has the meaning it had in Section~\ref{sec:ENBC},
because if $j$ is independent of $H\cup(j\upset\setminus\{j\})$,
then $j$ is also independent of $F\cup(j\upset\setminus\{j\})$.
Hence, $G\subset\overline{F}$.

The contribution of $J(F)$ to the Hilbert function is
\begin{align}\label{eq:Hilb1}
&\phantom= \sum_{G\in J(F)} \left(\frac{q}{1-q}\right)^{x(G)} \left(\frac{t}{1-t}\right)^{y(G)}
\\&= \left(1 + \frac{q}{1-q}\right)^{|L(F)\setminus F|} \left(\frac{t}{1-t}\right)^{|F|} \left(1+\frac{t}{1-t}\right)\notag
\\&= \frac{t^r}{(1-q)(1-t)^{r+1}}\left(\frac{1-t}{(1-q)t}\right)^{r-|F|},\notag
\end{align}
since $|L(F)|=r+1$ for all $F$.
The Hilbert function is the sum of these contributions, 
and there are $\overline w_i$ faces $F$ of~$\Delta^{\mathrm{RNBC}}_M$ with $|F|=i$, giving
\begin{align*}
H(S/I(\Gamma_\A);q,t) 
&= \frac{t^r}{(1-q)(1-t)^{r+1}}\cdot\sum_{i=0}^r \overline w_i \left(\frac{1-t}{(1-q)t}\right)^{r-i}
\\&= \frac{(-t)^r}{(1-q)(1-t)^{r+1}}\cdot\overline\chi_M\left(\frac{t-1}{(1-q)t}\right)
\\&= \frac{1}{1-qt}\left(\frac{t}{t-1}\right)^{r+1}\chi_M\left(\frac{t-1}{(1-q)t}\right).\qedhere
\end{align*}
\end{proof}

Since $R_\mathcal A$ is the second projection of $\Gamma_\A$,
we may recover $H(R_\mathcal A;t)$ by evaluating $q$ at~0, 
corresponding to intersection with the subring $\Bbbk[y_0,\ldots,y_n]$ of~$S$.
This evaluation is
\[H(S/I(\Gamma_\A);0,t) 
= \left(\frac{t}{t-1}\right)^{r+1}\chi_M\left(\frac{t-1}{t}\right)\]
agreeing with the result of Orlik and Terao.

Agreement with the result of Huh and Katz,
invoking~\eqref{eq:cohom class},
was used in our proof.  Note, though,
that this cohomology class can also be calculated directly from the Hilbert series
using the method of multidegrees \cite[\S8.5]{MS}.

\section{Example: Braid arrangement for $A_3$}

We work out the details for the matroid of braid arrangement of $A_3$, also known as the graphical arrangement for the complete graph $K_4$.  This is a matroid of rank 3 on 6 elements, with characteristic polynomial $\chi_{K_4}(q)=q^3-6q^2+11q-6$ and reduced characteristic polynomial $\overline{\chi}_{K_4}(q)=q^2-5q+6$.  Considered as a set of vectors in $\Bbbk^4$, we can realize the arrangement as $x_0=e_1-e_2$, $x_1=e_1-e_3$, $x_2=e_1-e_4$, $x_3=e_2-e_3$, $x_4=e_2-e_4$, and $x_5=e_3-e_4$.  The circuits of this arrangement are:
\begin{multline*} \{x_0-x_1+x_3, x_1-x_2+x_5, x_0-x_2+x_4, x_3-x_4+x_5, \\ x_0-x_1+x_4-x_5, x_0-x_2+x_3+x_5, x_1-x_2-x_3+x_4 \}.
\end{multline*} 
The broken circuits are the sets $\{1, 3\}$, $\{2, 5\}$, $\{2, 4\}$,  $\{4, 5\}$, $\{ 1,4,5 \}$, $\{ 2,3,5 \}$ and $\{ 2,3,4 \}$. (The last three broken circuits contain other broken circuits and hence are non-minimal nonfaces.) The facets of the no broken circuit complex $\Delta^{\mathrm{NBC}}_{K_4}$ are $\{0,1, 2\}$, $\{0, 1, 4\}$, $\{0, 1, 5\}$, $\{0, 2, 3\}$, $\{0, 3, 4\}$, and $\{0, 3, 5\}$.  The number of facets should be the constant term of the characteristic polynomial, which is correct.  The facets of the reduced no broken circuit complex $\Delta^{RNBC}_{K_4}$ are $\{1, 2\}$, $\{1, 4\}$, $\{1, 5\}$, $\{2, 3\}$, $\{3, 4\}$, and $\{3, 5\}$, 
and the other faces are the empty set and the five vertices.

Let us take $\prec$ to be the natural order $\leq$.
The facets of the extended no broken circuit complex $\Delta^{\mathrm{ENBC}}_{K_4,\leq}$ are 
\begin{quote}
$\{y_0, x_2, x_4, x_5\}$, $\{y_0, y_1, x_4, x_5\}$, $\{y_0, y_2, x_4, x_5\}$, $\{y_0, y_3, x_2, x_5\}$,\\ 
$\{y_0, y_4, x_2, x_5\}$, $\{y_0, y_5, x_2, x_4\}$, $\{y_0, y_1, y_2, x_4\}$, $\{y_0, y_1, y_4, x_5\}$,\\
$\{y_0, y_1, y_5, x_4\}$, $\{y_0, y_2, y_3, x_5\}$, $\{y_0, y_3, y_4, x_2\}$, and $\{y_0, y_3, y_5, x_2\}$.  
\end{quote}
Hence the cohomology class of $V(\Delta^{\mathrm{ENBC}}_{K_4,\leq})$ in $H^*(\mathbb{P}^5\times\mathbb{P}^5)$ is
$$[V(\Delta^{\mathrm{ENBC}}_{K_4,\leq})]=
[\mathbb{P}^2\times\mathbb{P}^0]+5[\mathbb{P}^1\times\mathbb{P}^1]+6[\mathbb{P}^0\times\mathbb{P}^0].$$

The ideal $I(\Gamma_{K_4})$ can be presented, with each polynomial written in term order, as
\begin{align*}
I(\Gamma_{K_4})&=\langle
x_0-x_1+x_3, x_1-x_2+x_5, x_0-x_2+x_4, x_3-x_4+x_5,\\&\qquad
x_1y_1-x_0y_0, x_2y_2-x_0y_0, x_3y_3-x_0y_0, x_4y_4-x_0y_0, x_5y_5-x_0y_0,\\&\qquad
y_1y_3-y_0y_3+y_0y_1, y_2y_5-y_1y_5+y_1y_2, y_2y_4-y_0y_4+y_0y_2, \\&\qquad
    y_4y_5-y_3y_5+y_3y_4\rangle.
\end{align*}

The initial ideal $\init I(\Gamma_{K_4})$ is given by
$$\init I(\Gamma_{K_4})=
\langle x_0, x_1, x_3, x_2y_1, x_2y_2, x_4y_3, x_5y_1y_2, y_3y_4x_5,y_1y_3, y_2y_5, y_2y_4, y_4y_5\rangle.$$
For example, $x_2y_1\in \init I(\Gamma_{K_4})$ since $2$ is dependent on $\{1, 5\}\subseteq \{1, 3, 4, 5\}$.

The Hilbert series of $S/I(\Gamma_{K_4})$ is given by
\begin{align*}
H(\Gamma_{K_4}; q, t) 
&= \left(1+\frac{q}{1-q}\right)^3\left(1+\frac{t}{1-t}\right)
\\&\quad+5\left(1+\frac{q}{1-q}\right)^2\left(\frac{t}{1-t}\right)\left(1+\frac{t}{1-t}\right)
\\&\quad+6\left(1+\frac{q}{1-q}\right)\left(\frac{t}{1-t}\right)^2\left(1+\frac{t}{1-t}\right) \\
&= \left(\frac{t^3}{(1-qt)(t-1)^3}\right)
\\&\qquad\cdot
\left(\left(\frac{t-1}{(1-q)t}\right)^3-6\left(\frac{t-1}{(1-q)t}\right)^2+11\left(\frac{t-1}{(1-q)t}\right)-6\right).
\end{align*}

Setting $q=0$ gives
\begin{align*}
H(R_{K_4};t)
&= 1+6\left(\frac{t}{1-t}\right)+11\left(\frac{t}{1-t}\right)^2+6\left(\frac{t}{1-t}\right)^3 \\
&= \left(\frac{t^3}{(t-1)^3}\right)
\left(\left(\frac{t-1}{t}\right)^3-6\left(\frac{t-1}{t}\right)^2+11\left(\frac{t-1}{t}\right)-6\right).
\end{align*}

\end{document}